\documentclass[12pt]{article}%
\usepackage{amsmath}
\usepackage{amsfonts}
\usepackage{amssymb}
\usepackage{graphicx}%
\setcounter{MaxMatrixCols}{30}
\providecommand{\U}[1]{\protect\rule{.1in}{.1in}}
\newtheorem{theorem}{Theorem}

\newtheorem{corollary}[theorem]{Corollary}

\newtheorem{proposition}[theorem]{Proposition}
\newtheorem{remark}[theorem]{Remark}

\newenvironment{proof}[1][Proof]{\noindent\textbf{#1.} }{\ \rule{0.5em}{0.5em}}
\begin{document}

\title{A note on compact and $\sigma$-compact subsets of probability measures on
metric spaces with an application to the distribution free newsvendor problem}
\author{\'{O}scar Vega-Amaya\thanks{This author (OVA) declares that he has no conflict
of interest.} \ \thanks{Corresponding author. Email: oscar.vega@unison.mx}
\ and Fernando Luque-V\'{a}squez\thanks{This author (FLV) declares that he has
no conflict of interest.} \ \thanks{Email: fernando.luque@unison.mx}\\Departamento de Matem\'{a}ticas,\\Universidad de Sonora,\\Hermosillo, Sonora, M\'{e}xico}
\date{January 18, 2023}
\maketitle

\begin{abstract}
This note identifies compact and $\sigma$-compact subsets of probability
measures on a class of metric spaces with respect to the weak convergence
topology. Moreover, it is shown by an example, that the space of probability
measures on a $\sigma$-compact metric spaces not need to be $\sigma$-compact
space, even though the converse statement holds true for metric spaces. The
results are applied to an extended form of the distribution free newsvendor problem.

\textbf{Key words:} probability measures spaces, weak convergence, weak
$\sigma$-compact subsets, minimax problems, newsvendor problem.

\end{abstract}

\section{Introduction}

This note shows the compactness and $\sigma$-compactnes of some important
subsets of probability measures on (Heine-Borel) metric spaces with respect to
the weak convergence of measures. These kind of results, in addition to be
interesting by themselves, are useful in minimax or distributional robustness
problems, that is, in optimization problems subject to random factors where
the involved probability distribution is partiallly known or misspecified
(see, for instance, \cite{GT, Kara, ova}). In fact, these results are used in
section 4 to extend the validity of the Scarf's rule \cite{Gallego, Scarf} for
the distribution free newsvendor problem with known mean and variance to the
case where these quantities are just known that belong to some closed
intervals. It is also shown with a very short and simple proof that the
compactness of the metric space implies the compactness of the space of
probability measures.

On the other hand, it is well known that a metric space is compact if and only
if the set of probability measures defined on its Borel $\sigma$-algebra is a
compact space when endowed with the topology of the weak convergence of
measures (see, for instance, \cite[Thm. 15.11, p. 513]{Ali}). Thus, one can
wonder whether a similar result holds for $\sigma$-compact spaces, that is, if
the $\sigma$-compactness of a metric space implies the $\sigma$-compactness of
the space of probability measures and viceversa. This problem was already
raised in reference \cite{Luque}, remaining open up to now to the best
knowledge of the authors. The present note shows that the set of probability
measures on the set of real numbers--endowed with the standard metric--is not
$\sigma$-compact (Proposition \ref{T3} below), so the first statement is
false. More specifically, it is shown that any closed ball of probability
measures on the real number set is not compact; thus, any compact subset of
probability measures is nowhere dense, which means that it has not interior
points. Hence, by the Baire's category theorem (see, for instance, \cite[Thm.
4.7-2, p. 247]{Krey}), the space of probability measures can not be $\sigma
$-compact because it is a complete metric space. On the other hand,
Proposition \ref{T4} shows that the reciprocal stament holds for any metric
space, that is, the $\sigma$-compactness of the probability measures implies
the $\sigma$-compactness of the metric space.

The remainder part of the present note is organized as follows. To ease the
reading, section 2 collects a number of concepts and results related to the
weak convergence of probability measures, whereas section 3 introduces the
subsets of probability measures of interest and proves their compactness
properties. Section 4 starts discussing briefly the classical newsvendor
problem and then continues with the distribution free variant introduced by
Scarf \cite{Scarf} and the extension previously mentioned.

\section{Preliminary concepts and results}

For a metric space $(X,d),$ denote by $B_{d}(x,r)$ and $B_{d}[x,r]$ the open
and closed balls, respectively, with center at the point $x\in X$ and radius
$r>0.$ The class of bounded continuous functions on $X$ is denoted by
$\mathcal{C}_{b}(X)$ and the Borel $\sigma$-algebra by $\mathcal{B}(X)$.
Recall that the Borel $\sigma$-algebra is generated by the open subsets of
$X.$ Moreover, denote by $\mathbb{P}(X)$ the space of probability measures on
$X$ endowed with the topology of the weak convergence $\mathcal{W}$, that is,
with the coarsest topology that makes continuous the mapping%
\[
\mu\rightarrow\int_{X}v(y)\mu(dy)
\]
for all $v\in\mathcal{C}_{b}(X)$. Thus, it is said that a sequence $\{\mu
_{n}\}\subset\mathbb{P}(X)$ converges weakly to $\mu\in\mathbb{P}(X)$ (written
as $\mu_{n}\Rightarrow\mu$ for short) if and only if%
\[
\int_{X}v(y)\mu_{n}(dy)\rightarrow\int_{X}v(y)\mu(dy)\ \ \ \forall
v\in\mathcal{C}_{b}(X).
\]

The weak convergence topology $\mathcal{W}$ is metrizable and separable if and
only if the space $(X,d)$ is a separable metric space (see \cite[Thm. 5.12, p.
513]{Ali} or \cite[Prop. 7.20, p. 127]{Bert}). Thus, if the space $(X,d)$ is a
separable metric space, the so-called Prokhorov metric metrizes the topology
$\mathcal{W}$ \cite[Thm 11.3.1, p. 394]{Dudley}. The Prokhorov metric--also
known as L\'{e}vy-Prokhorov metric--is defined as follows. For a subset
$A\subset X$ and $\varepsilon>0$ put%
\[
A^{\varepsilon}:=\{x\in X:d(x,y)<\varepsilon\text{ for some }y\in A\}.
\]
The Prokhorov metric is defined as%
\[
d_{P}(\mu,\lambda):=\inf\{\varepsilon>0:\mu(A)\leq\lambda(A^{\varepsilon
})+\varepsilon\ \ \forall A\in\mathcal{B}(X)\}
\]
for $\mu,\lambda\in\mathbb{P}(X)$. Moreover, the metric space $(X,d)$ is a
Polish space (separable complete metric space) if and only if $\mathbb{P}(X)$
is a Polish space (\cite[Thm. 15.15, p. 515]{Ali}).

For the real numbers set $\mathbb{R}$ endowed with the standard metric, the
weak convergence topology $\mathcal{W}$ is also metrized by the L\'{e}vy
metric given next. Denote by $F_{\mu}$ the probability distribution function
defined by the probability measure $\mu\in\mathbb{P}(\mathbb{R})$. The
L\'{e}vy metric is given as%
\[
d_{L}(\mu,\lambda):=\inf\{\varepsilon>0:F_{\mu}(x-\varepsilon)-\varepsilon\leq
F_{\lambda}(x)\leq F_{\mu}(x+\varepsilon)+\varepsilon\ \ \forall
x\in\mathbb{R}\}.
\]

Thus, $\mu_{n}\overset{w}{\rightarrow}\mu$ if and only if $d_{L}(\mu_{n}%
,\mu)\rightarrow0$ \cite[p. 423]{Gibbs}. Moreover, such a convergence is
equivalent to the weak convergence of the probability distribution functions,
that is, to%
\[
F_{\mu_{n}}(x)\rightarrow F_{\mu}(x)\ \ \forall x\in C_{F_{\mu}},
\]
where $C_{F}$ stands for the subset of continuity points of the probability
distribution function $F$ (see \cite[Helly-Bray Theorem 11.1.2, p.
387]{Dudley} or \cite[p. 18]{Bill}).

\section{Compact and $\sigma$-compact subsets of probability measures in
metric spaces}

Consider the subset of probability measures%
\[
\mathbb{P}_{b}(X):=\{\mu\in\mathbb{P}(X):\int_{X}d(x,x_{0})\mu(dx)\leq b\},
\]
where $x_{0}\in X$ is a fixed but arbitrary point and $b>0,$ and also the
subset%
\[
\mathbb{P}_{0}(X):=\{\mu\in\mathbb{P}(X):\int_{X}d(x,x_{0})\mu(dx)<\infty\}.
\]
Next, for constants $0\leq a<b,c>0$ and $r>0,$ let%
\[
\mathbb{P}_{a,b}(X):=\{\mu\in\mathbb{P}(X):a\leq\int_{X}d(x,x_{0})\mu(dx)\leq
b\},
\]%
\[
\mathbb{P}_{a,b}^{r,c}(X):=\{\mu\in\mathbb{P}_{a,b}(X):\int_{X}d^{1+r}%
(x,x_{0})\mu(dx)\leq c\}
\]
and%
\[
\mathbb{P}_{a,b}^{r}(X):=\{\mu\in\mathbb{P}_{a,b}(X):\int_{X}d^{1+r}%
(x,x_{0})\mu(dx)<\infty\}.
\]

Theorem \ref{T1} below shows the compactness of the subset $\mathbb{P}_{b}%
(X)$; thus, it follows that $\mathbb{P}_{0}(X)$ is $\sigma$-compact. This
result is borrowed from \cite[Thm. 4]{Luque} and it uses the following
concept: a metric space $(X,d)$ is said to be \textit{Heine-Borel} or
\textit{proper} metric space if the closed and bounded subsets are compact
(\cite{Will}, \cite[ Ch. 9, Problem 31]{Royden}). For instance, the set of
real numbers $\mathbb{R}$ endowed with the usual metric is a Heine-Borel
metric space. As a direct consequence of Theorem \ref{T1}, Corollary \ref{T2}
below shows that the compactness of the metric space $X$ implies the
compactness of the space $\mathbb{P}(X)$. On the other hand, Proposition
\ref{T5} below shows that $\mathbb{P}_{a,b}(X)$ for $a>0$ not need be a closed
subset of probability measures, whereas Theorem \ref{T6} proves that
$\mathbb{P}_{a,b}^{r,c}(X)$ is compact (hence, $\mathbb{P}_{a,b}^{r}(X)$ is a
$\sigma$-compact subset).

\begin{theorem}
\label{T1}If $(X,d)$ is a Heine-Borel metric space, then $\mathbb{P}_{b}(X)$
is a compact subset for each $b>0.$ Hence, $\mathbb{P}_{0}(X)$ is a $\sigma
$-compact subset.
\end{theorem}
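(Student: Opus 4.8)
The plan is to establish compactness of $\mathbb{P}_b(X)$ in two steps: first show the set is tight (hence relatively compact by Prokhorov's theorem), then show it is closed under weak convergence. For tightness, fix $\varepsilon > 0$ and consider the closed balls $K_n := B_d[x_0, n]$. Since $(X,d)$ is Heine-Borel, each $K_n$ is compact. For $\mu \in \mathbb{P}_b(X)$, Markov's inequality gives $\mu(X \setminus K_n) = \mu(\{x : d(x,x_0) > n\}) \leq \frac{1}{n}\int_X d(x,x_0)\,\mu(dx) \leq b/n$, uniformly over all $\mu \in \mathbb{P}_b(X)$. Choosing $n > b/\varepsilon$ yields $\mu(K_n) \geq 1 - \varepsilon$ for every $\mu \in \mathbb{P}_b(X)$, which is exactly uniform tightness. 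By Prokhorov's theorem (valid here since a Heine-Borel metric space is separable and complete, being a countable union of compact balls, hence Polish), $\mathbb{P}_b(X)$ is relatively compact in the weak topology.

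Next I would show $\mathbb{P}_b(X)$ is weakly closed. Suppose $\mu_n \Rightarrow \mu$ with $\mu_n \in \mathbb{P}_b(X)$; I must check $\int_X d(x,x_0)\,\mu(dx) \leq b$. The function $x \mapsto d(x,x_0)$ is continuous and nonnegative but not bounded, so I cannot apply the defining property of weak convergence directly. Instead, truncate: put $g_k(x) := \min\{d(x,x_0), k\}$, which lies in $\mathcal{C}_b(X)$. Then $\int_X g_k\,d\mu = \lim_n \int_X g_k\,d\mu_n \leq \limsup_n \int_X d(x,x_0)\,\mu_n(dx) \leq b$ for every $k$. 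Letting $k \to \infty$ and applying the monotone convergence theorem gives $\int_X d(x,x_0)\,\mu(dx) = \lim_k \int_X g_k\,d\mu \leq b$, so $\mu \in \mathbb{P}_b(X)$. A relatively compact closed subset of a metric space is compact, so $\mathbb{P}_b(X)$ is compact.

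Finally, for the $\sigma$-compactness of $\mathbb{P}_0(X)$: observe that $\mathbb{P}_0(X) = \bigcup_{b=1}^{\infty} \mathbb{P}_b(X)$, since $\mu \in \mathbb{P}_0(X)$ means $\int_X d(x,x_0)\,\mu(dx) < \infty$, hence the integral is $\leq b$ for some integer $b$. Each $\mathbb{P}_b(X)$ is compact by the first part, so $\mathbb{P}_0(X)$ is a countable union of compact sets, i.e., $\sigma$-compact.

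I expect the main subtlety to be the closedness argument, specifically the fact that $d(x, x_0)$ is unbounded so that the truncation-plus-monotone-convergence maneuver is genuinely needed; the tightness half is a clean application of Markov's inequality together with the Heine-Borel hypothesis, which is precisely what makes the closed balls compact. One should also note in passing why Prokhorov's theorem applies — a Heine-Borel metric space is $\sigma$-compact (write $X = \bigcup_n B_d[x_0,n]$), hence separable, and it is complete since any Cauchy sequence is bounded, thus eventually trapped in a compact ball where it converges; so $(X,d)$ is Polish and the results of Section 2 apply.
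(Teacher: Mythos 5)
Your proof is correct and follows essentially the same route as the paper: uniform tightness via Markov's inequality and the Heine--Borel property, Prokhorov's theorem for relative compactness, closedness via truncation plus a monotone limit, and the countable union $\mathbb{P}_{0}(X)=\cup_{b\in\mathbb{N}}\mathbb{P}_{b}(X)$ for $\sigma$-compactness. The only difference is minor: where the paper truncates with the lower semicontinuous functions $Z\,\mathbb{I}_{B(x_{0},2k/\varepsilon)}$ and invokes the portmanteau $\liminf$ inequality, you truncate with the bounded continuous functions $\min\{d(\cdot,x_{0}),k\}$ and use the definition of weak convergence directly, which is if anything slightly more elementary; your explicit check that a Heine--Borel space is Polish (so that Prokhorov's theorem applies) is a detail the paper leaves implicit.
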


\begin{proof}
Let $b>0$ and $\varepsilon>0$ be arbitrary; next consider the compact subset
$K:=B_{d}[x_{0},2b/\varepsilon]$ and the measurable function $Z(\cdot
):=d(\cdot,x_{0})$ defined on the measure space $(X,\mathcal{B}(X),\mu),$
where $\mu$ is an arbitrary probability measure in $\mathbb{P}_{b}(X).$ Thus,
the Markov inequality implies that%
\[
\mu(K^{c})\leq\mu(Z>\tfrac{2b}{\varepsilon})\leq\dfrac{\varepsilon}%
{2}<\varepsilon.
\]
Then, the set $\mathbb{P}_{b}(X)$ is a class of (uniformly) tight measures.
Hence, by Prokhorov theorem \cite[Thm. 6.1, p. 37]{Bill}, $\mathbb{P}_{b}(X)$
is relatively compact, that is, any sequence in $\mathbb{P}_{b}(X)$ has a
weakly convergent subsequence. Thus, to show that $\mathbb{P}_{b}(X)$ is a
compact subset it is enough to prove that this subset is closed.

Suppose that the sequence $\{\mu_{n}\}\subset\mathbb{P}_{b}(X)$ converges to
$\mu\in\mathbb{P}(X).$ Consider the lower semicontinuous functions
$Z_{k}:=Z\mathbb{I}_{B(x_{0},2k/\varepsilon)},k\in\mathbb{N}$. Here,
$\mathbb{I}_{A}$ stands for the indicator function of the subset $A\subset X.$
Then%
\[
\int_{X}Z_{k}(x)\mu(dx)\leq\liminf_{n\rightarrow\infty}\int_{X}Z_{k}(x)\mu
_{n}(dx)\leq\liminf_{n\rightarrow\infty}\int_{X}Z(x)\mu_{n}(dx)\leq b.
\]
Since $Z_{k}\uparrow Z$ pointwise, the latter inequalities yields%
\[
\int_{X}Z(x)\mu(dx)\leq b,
\]
which shows that $\mu\in\mathbb{P}_{b}(X).$ Therefore, $\mathbb{P}_{b}(X)$ is
a compact subset.
\end{proof}

\begin{corollary}
\label{T2}A metric space $(X,d)$ is compact if and only if $\mathbb{P}(X)$ is
a compact space.
\end{corollary}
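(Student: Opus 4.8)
The plan is to treat the two implications separately, since they are of quite different character. The implication ``$X$ compact $\Rightarrow\mathbb{P}(X)$ compact'' I would read straight off Theorem \ref{T1}; the converse is the more classical half, and I would either recall the standard embedding of $X$ into $\mathbb{P}(X)$ via Dirac measures or simply invoke \cite[Thm. 15.11, p. 513]{Ali}.

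For ``$X$ compact $\Rightarrow\mathbb{P}(X)$ compact'': a compact metric space is Heine--Borel, because every closed subset of a compact space is compact, and it is bounded, so for a fixed $x_0\in X$ the number $b:=1+\sup_{x\in X}d(x,x_0)$ is finite and positive. Since $d(x,x_0)\leq b$ for every $x\in X$, we get $\int_X d(x,x_0)\mu(dx)\leq b$ for every $\mu\in\mathbb{P}(X)$, that is, $\mathbb{P}_b(X)=\mathbb{P}(X)$. Theorem \ref{T1} then says $\mathbb{P}_b(X)$ is compact, hence so is $\mathbb{P}(X)$.

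For the converse I would embed $X$ into $\mathbb{P}(X)$ by $\iota(x):=\delta_x$. The map $\iota$ is injective, and testing weak convergence against $v(\cdot):=\min\{1,d(\cdot,x)\}\in\mathcal{C}_b(X)$ shows that both $\iota$ and $\iota^{-1}$ are continuous, so $\iota$ is a homeomorphism of $X$ onto $D:=\iota(X)$. Since a closed subset of a compact space is compact, it then suffices to prove that $D$ is closed in $\mathbb{P}(X)$, for then $D\cong X$ is compact. To see $D$ is closed, I would take $\mu$ in its closure, pick a net $\delta_{x_\alpha}\Rightarrow\mu$, and pass to the limit in $\int_X fg\,d\delta_{x_\alpha}=(\int_X f\,d\delta_{x_\alpha})(\int_X g\,d\delta_{x_\alpha})$ for $f,g\in\mathcal{C}_b(X)$, using $fg\in\mathcal{C}_b(X)$; with $g=f$ this yields that every $f\in\mathcal{C}_b(X)$ has zero $\mu$-variance, i.e.\ is $\mu$-almost everywhere constant. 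If $\mu$ were not a point mass there would be a closed set $F$ with $0<\mu(F)<1$, and then $f(\cdot):=\min\{1,d(\cdot,F)\}\in\mathcal{C}_b(X)$ vanishes exactly on $F$; being $\mu$-almost everywhere constant and vanishing on the positive-measure set $F$, it must vanish $\mu$-almost everywhere, forcing $\mu(F)=1$, a contradiction. Hence $\mu$ is a point mass, i.e.\ $\mu\in D$.

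The step I expect to demand the most care is that last one: producing a closed set $F$ with $0<\mu(F)<1$ whenever $\mu$ is not a point mass. When $\mu$ has an atom this is immediate, that atom having mass in $(0,1)$ and $F$ being the corresponding singleton; in the atomless case one extracts $F$ as a closed ball or annulus about a point carrying $\mu$ by bisecting radii and using continuity of $\mu$ along monotone families of balls, the one delicate configuration being that $\mu$ concentrates on a single metric sphere, which is removed by re-centring on that sphere --- transparent for the Heine--Borel spaces such as $\mathbb{R}^n$ relevant to this note. Alternatively, as noted above, the whole converse implication is exactly \cite[Thm. 15.11, p. 513]{Ali} and may be quoted.
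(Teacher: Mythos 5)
Your proposal is correct and follows essentially the same route as the paper: the forward implication is read off Theorem \ref{T1} after noting that compactness makes $X$ Heine--Borel and bounded, so $\mathbb{P}(X)=\mathbb{P}_{b}(X)$, and the converse uses the Dirac embedding $x\mapsto\delta_x$ together with the fact that its image is a closed (hence compact) subset of $\mathbb{P}(X)$. The only divergence is that you prove closedness of $\delta(X)$ by hand via the zero-variance argument --- where, incidentally, the existence of a closed $F$ with $0<\mu(F)<1$ for a non-Dirac $\mu$ follows at once from closed-regularity of finite Borel measures on metric spaces, making your case analysis on atoms and balls unnecessary --- whereas the paper simply quotes this closedness from \cite[Thm. 15.8, p. 512]{Ali}.
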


\begin{proof}
Suppose that $(X,d)$ is a compact space. Then, it is obviously a Heine-Borel
metric space. Next observe that the constant $b^{\ast}:=\sup_{x\in X}%
d(x,x_{0})$ is finite since the mapping $d(\cdot,x_{0})$ is continuous and $X$
is compact$.$ Hence, $\mathbb{P}(X)=\mathbb{P}_{b^{\ast}}(X),$ which yields
the compactness of $\mathbb{P}(X)$.

The proof of the second part is the same given in \cite[Thm. 15.11, p.
513]{Ali}. It is included here just for the sake of completeness. Thus,
suppose that $\mathbb{P}(X)$ is a compact space and denote by $\delta(x)$ the
Dirac measure at $x\in X$, that is, the measure concentrated at the point
$x\in X.$ Now, from \cite[Thm. 15.8, p. 512]{Ali}, the mapping $x\rightarrow
\delta(x)$ embeds $X$ into $\mathbb{P}(X);$ hence, $X$ can be topologically
identified with the subset%
\[
\delta(X):=\{\delta(x):x\in X\}\subset\mathbb{P}(X).
\]
Then, observe that $\delta(X)$ is a\ separable and closed subset; hence, it is
compact. Therefore, $X$ itself is a compact space.
\end{proof}

Proposition \ref{T3} below shows that $\mathbb{P}(X)$ need not be a $\sigma
$-compact space when $X$ is a $\sigma$-compact space. Nonetheless, the
converse assertion holds true as shown in Proposition \ref{T4} below. The
former proposition uses the Baire's category theorem \cite[Thm. 4.7-2, p.
247]{Krey}, which states that no complete metric space is a denumerable union
of closed sets with empty interior.

\begin{proposition}
\label{T3}The probability measures space $\mathbb{P}(\mathbb{R})$ is not a
$\sigma$-compact space, where $\mathbb{R}$ is the real numbers set endowed
with the usual metric.
\end{proposition}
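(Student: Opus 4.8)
The plan is to follow the strategy sketched in the introduction: show that every closed ball in the Prokhorov (or Lévy) metric on $\mathbb{P}(\mathbb{R})$ fails to be compact, conclude that every compact subset of $\mathbb{P}(\mathbb{R})$ has empty interior, and then invoke Baire's category theorem to rule out $\sigma$-compactness. Since $\mathbb{P}(\mathbb{R})$ is a Polish space (as $\mathbb{R}$ is Polish), it is a complete metric space, so a countable union of compact — hence closed — nowhere dense sets cannot be all of $\mathbb{P}(\mathbb{R})$.

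First I would fix an arbitrary $\mu\in\mathbb{P}(\mathbb{R})$ and $\rho>0$ and exhibit a sequence in the closed ball $B_{d_L}[\mu,\rho]$ with no weakly convergent subsequence. The natural device is to push mass off to infinity: set $\mu_n:=(1-\eta)\mu+\eta\,\delta(n)$ for a small fixed $\eta\in(0,\rho)$ (here $\delta(n)$ is the Dirac mass at $n$, in the notation of the previous proof). One checks that $d_L(\mu,\mu_n)\le\eta<\rho$ for every $n$, using either the Lévy-metric definition directly (the distribution functions $F_{\mu_n}$ and $F_\mu$ differ by at most $\eta$ in the vertical direction once $x<n$, and for $x\ge n$ one shifts horizontally) or the elementary bound $d_{L}((1-\eta)\mu+\eta\nu,\mu)\le\eta$ valid for any $\nu$. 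So $\{\mu_n\}\subset B_{d_L}[\mu,\rho]$. On the other hand $\{\mu_n\}$ is not tight: for any compact $K\subset\mathbb{R}$ we have $K\subset(-m,m)$ for some $m$, and then $\mu_n(K^c)\ge\eta\,\delta(n)(K^c)=\eta$ for all $n>m$, so $\inf_n\mu_n(K^c)\ge\eta>0$. By the converse direction of Prokhorov's theorem a relatively compact set in $\mathbb{P}(\mathbb{R})$ must be tight, so $\{\mu_n\}$ has no weakly convergent subsequence; equivalently one can argue directly that $F_{\mu_n}(x)\to(1-\eta)F_\mu(x)$ for every fixed $x$, whose limit is not a distribution function, so no subsequential weak limit in $\mathbb{P}(\mathbb{R})$ exists. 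Hence $B_{d_L}[\mu,\rho]$ is not compact.

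Next, suppose $C\subset\mathbb{P}(\mathbb{R})$ is compact. If $C$ had an interior point $\mu$, then some closed ball $B_{d_L}[\mu,\rho]$ would be contained in $C$; being a closed subset of a compact set it would itself be compact, contradicting the previous paragraph. Therefore $C$ has empty interior, and being compact it is closed, so $C$ is nowhere dense. Finally, if $\mathbb{P}(\mathbb{R})=\bigcup_{k\ge1}C_k$ with each $C_k$ compact, then $\mathbb{P}(\mathbb{R})$ would be a countable union of closed nowhere-dense sets, contradicting Baire's category theorem since $\mathbb{P}(\mathbb{R})$ is a complete metric space. Hence $\mathbb{P}(\mathbb{R})$ is not $\sigma$-compact.

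The main obstacle, and the only step that needs genuine care, is the non-compactness of the ball: one must make sure the perturbation $\mu_n$ actually lands in the prescribed ball for the Lévy (equivalently Prokhorov) metric — the bound $d_L((1-\eta)\mu+\eta\delta(n),\mu)\le\eta$ should be verified cleanly from the definition of $d_L$ — and one must invoke the correct (converse) half of Prokhorov's theorem, namely that relative compactness forces uniform tightness, which on $\mathbb{R}$ is elementary via the distribution-function characterization of weak convergence. Everything after that (empty interior, nowhere density, Baire) is formal.
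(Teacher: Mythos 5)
Your proposal is correct and follows the same architecture as the paper's proof: show every closed ball in $\mathbb{P}(\mathbb{R})$ fails to be compact, deduce that compact subsets are nowhere dense, and conclude via Baire's category theorem on the complete metric space $\mathbb{P}(\mathbb{R})$. The only genuine difference is the witness sequence for the non-compactness of $B_{d_L}[\mu,\rho]$. The paper truncates the distribution function $F$ at levels $r$ and $1-r$ (after restricting to $r<1/2$ and introducing the quantiles $a_r,b_r$) and spreads the removed mass out to $\pm n$; you instead take the convex perturbation $\mu_n=(1-\eta)\mu+\eta\,\delta(n)$ with $0<\eta<\rho$. Your construction is cleaner: the bound $d_L(\mu_n,\mu)\le d_K(\mu_n,\mu)\le\eta$ is immediate from $F_{\mu_n}-F_\mu=\eta\bigl(\mathbb{I}_{[n,\infty)}-F_\mu\bigr)$, it works for every radius $\rho>0$ without case splitting, and the failure of tightness (a fixed mass $\eta$ escaping every compact set) is transparent. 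Your alternative finish --- noting that $F_{\mu_n}\to(1-\eta)F_\mu$ pointwise, which is not a distribution function, so no subsequence can converge weakly --- even avoids citing the converse half of Prokhorov's theorem, keeping the ball step entirely elementary on $\mathbb{R}$. Everything downstream (empty interior, nowhere density, Baire) coincides with the paper.
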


\begin{proof}
The key point to prove the result is that the closed balls of $\mathbb{P}%
(\mathbb{R})$ are not compact subsets; this implies that the compact subsets
of $\mathbb{P}(\mathbb{R})$ are nowhere dense, that is, they have empty
interior. Since $\mathbb{P}(\mathbb{R})$ is a complete metric space, from the
Baire's category theorem follows that it is not a $\sigma$-compact space
\cite[Thm. 4.7-2, p. 247]{Krey}.

Let $\mu\in\mathbb{P}(\mathbb{R})$ be a fixed probability measure and denote
by $F$ its probability distribution function. Next it is shown that the ball
$B_{d_{L}}[\mu,r]$ is not a compact subset for any $r>0$. Note that one can
assume without losses of generality that $r\in(0,1/2).$ Define the constants%
\begin{align*}
a_{r}  &  :=\sup\{x\in\mathbb{R}:F(x)<r\},\\
& \\
b_{r}  &  :=\inf\{x\in\mathbb{R}:F(x)\geq1-r\},
\end{align*}
and consider the probability distribution functions defined as follows: for
$n\leq\max\{|a_{r}|,|b_{r}|\},$ set%
\[
F_{n}(x):=F(x),\ x\in\mathbb{R}.
\]
For $n>\max\{|a_{r}|,|b_{r}|\},$ put%

\[
F_{n}(x):=\left\{
\begin{array}
[c]{ccc}%
0 & \text{for} & x<-n,\\
r & \text{for} & -n\leq x<a_{r},\\
F(x) & \text{for} & a_{r}\leq x<b_{r},\\
1-r & \text{for} & b_{r}\leq x<n,\\
1 & \text{for} & x\geq n.
\end{array}
\right.
\]
Denote by $\mu_{n}$ the probability measure corresponding to the probability
distribution function $F_{n}.$

Recall that the Kolmogorov metric on $\mathbb{P}(\mathbb{R})$ is given as%
\[
d_{K}(\mu,\lambda):=\sup_{x\in\mathbb{R}}|F_{\mu}(x)-F_{\lambda}(x)|
\]
and also that $d_{L}\leq d_{K}\ $(see \cite[p. 425]{Gibbs}). Moreover, it can
be seen by direct computations that%
\[
d_{K}(\mu_{n},\mu)\leq r,
\]
which implies that the sequence $\{\mu_{n}\}$ belongs to the closed ball
$B_{d_{L}}[\mu,r].$ Now notice that the sequence $\{\mu_{n}\}$ is not
(uniformly) tight; hence, by Prokhorov theorem \cite[Th, 6.2, p. 37]{Bill},
$B_{d_{L}}[\mu,r]$ is not a compact subset of $\mathbb{P}(\mathbb{R})$.
\end{proof}

\begin{proposition}
\label{T4}If $\mathbb{P}(X)$ is a $\sigma$-compact space, then $X$ is a
$\sigma$-compact space.
\end{proposition}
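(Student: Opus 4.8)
The plan is to exploit the canonical embedding of $X$ into $\mathbb{P}(X)$ through Dirac measures, exactly as in the proof of Corollary \ref{T2}. Write $\delta(x)$ for the Dirac measure at $x\in X$; by \cite[Thm. 15.8, p. 512]{Ali} the map $x\mapsto\delta(x)$ is a homeomorphism of $X$ onto the subspace $\delta(X)\subset\mathbb{P}(X)$. Assume $\mathbb{P}(X)$ is $\sigma$-compact and write $\mathbb{P}(X)=\bigcup_{n\in\mathbb{N}}K_{n}$ with each $K_{n}$ compact. Then $\delta(X)=\bigcup_{n}\big(K_{n}\cap\delta(X)\big)$ and, pulling back through the homeomorphism, $X=\bigcup_{n}\delta^{-1}(K_{n})$. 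Hence it suffices to show that each $\delta^{-1}(K_{n})$ is compact, and for this it is enough to prove that $\delta(X)$ is a closed subset of $\mathbb{P}(X)$: then $K_{n}\cap\delta(X)$ is a closed subset of the compact set $K_{n}$, hence compact, and $\delta^{-1}(K_{n})$ is its homeomorphic preimage.

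So the heart of the argument is the closedness of $\delta(X)$, which I would establish by showing its complement is open. Let $\nu\in\mathbb{P}(X)$ fail to be a Dirac measure; then there is a Borel set $A$ with $0<\nu(A)<1$, and, using that every finite Borel measure on a metric space is inner regular with respect to closed subsets (see, e.g., \cite{Bill}), one can choose disjoint closed sets $F_{1}\subset A$ and $F_{2}\subset A^{c}$ with $\nu(F_{1})>0$ and $\nu(F_{2})>0$. The function
\[
g(x):=\frac{d(x,F_{1})}{d(x,F_{1})+d(x,F_{2})},\qquad x\in X,
\]
is well defined (the denominator never vanishes, since $F_{1}\cap F_{2}=\emptyset$ and both sets are closed), lies in $\mathcal{C}_{b}(X)$, and takes the value $0$ on $F_{1}$ and $1$ on $F_{2}$, hence is not $\nu$-almost surely constant. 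Consequently $\Phi(\nu)>0$, where
\[
\Phi(\lambda):=\int_{X}g^{2}(x)\,\lambda(dx)-\Big(\int_{X}g(x)\,\lambda(dx)\Big)^{2},\qquad\lambda\in\mathbb{P}(X),
\]
is the variance of $g$. Since $g,g^{2}\in\mathcal{C}_{b}(X)$, the functional $\Phi$ is continuous for the weak convergence topology, and $\Phi(\delta(x))=g^{2}(x)-g^{2}(x)=0$ for all $x\in X$, so $\Phi$ vanishes on $\delta(X)$. Therefore $\{\lambda\in\mathbb{P}(X):\Phi(\lambda)>0\}$ is an open neighbourhood of $\nu$ disjoint from $\delta(X)$, which proves that $\delta(X)$ is closed. (Alternatively, one may simply invoke that the Dirac embedding of \cite[Thm. 15.8, p. 512]{Ali} has closed range, as was already used in Corollary \ref{T2}.)

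Putting the pieces together, once $\delta(X)$ is known to be closed each $\delta^{-1}(K_{n})$ is compact, and $X=\bigcup_{n}\delta^{-1}(K_{n})$ exhibits $X$ as a countable union of compact sets, i.e.\ as a $\sigma$-compact space. The only step that is not pure formalism is the closedness of $\delta(X)$, and within it the point requiring care is the passage from the mere failure of $\nu$ to be a point mass to an explicit function in $\mathcal{C}_{b}(X)$ with strictly positive $\nu$-variance; this is precisely where the inner regularity of finite Borel measures on metric spaces is used. Everything else is the routine behaviour of $\sigma$-compactness under a homeomorphism onto a closed subspace.
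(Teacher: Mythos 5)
Your proof is correct and follows essentially the same route as the paper: pull the cover $\mathbb{P}(X)=\bigcup_n K_n$ back through the Dirac embedding $x\mapsto\delta(x)$ of \cite[Thm. 15.8, p. 512]{Ali} and use that $\delta(X)$ is closed, so each $K_n\cap\delta(X)$ is compact. Your bonus self-contained proof that $\delta(X)$ is closed is a nice addition, but note that the step ``$\nu$ not Dirac $\Rightarrow$ there is $A$ with $0<\nu(A)<1$'' needs $X$ separable (to exclude exotic $\{0,1\}$-valued measures); the paper secures this at the outset by observing that $\sigma$-compactness of $\mathbb{P}(X)$ gives separability of $\mathbb{P}(X)$ and hence of $X$.
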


\begin{proof}
To prove this assertion first note that the $\sigma$-compactness property
implies that $\mathbb{P}(X)$ is separable, which in turn implies that $X$ is a
separable space by \cite[Thm. 15.12, p. 513]{Ali}. Now, let $\{\mathbb{K}%
_{n}\}$ be a sequence of compact subsets such that $\mathbb{P}(X)=\cup
_{n\geq1}\mathbb{K}_{n}$ and define%
\[
K_{n}:=\{x\in X:\delta(x)\in\mathbb{K}_{n}\},\ n\in\mathbb{N}.
\]
Notice that this latter subset is topologically the same that the subset
$\delta(X)\cap\mathbb{K}_{n}$, which is a compact subset of $\mathbb{P}(X)$
\cite[Thm. 15.8]{Ali}. Thus, the $\sigma$-compactness of $X$ follows after
noting that%
\[
X=\cup_{n\geq1}K_{n}.
\]

\end{proof}

\begin{proposition}
\label{T5}The subset $\mathbb{P}_{a,b}(\mathbb{R)},$ with $a>0,$ is not a
closed subset of $\mathbb{P}(\mathbb{R)}.$
\end{proposition}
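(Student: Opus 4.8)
The plan is to show non-closedness by exhibiting a sequence in $\mathbb{P}_{a,b}(\mathbb{R})$ whose weak limit falls outside the set. The mechanism to exploit is that the first-moment functional $\mu\mapsto\int_{\mathbb{R}}d(x,x_{0})\,\mu(dx)=\int_{\mathbb{R}}|x-x_{0}|\,\mu(dx)$ is only lower semicontinuous for the weak topology (this is exactly the phenomenon already used in the proof of Theorem \ref{T1}), not continuous, because mass can escape to infinity along a weakly convergent sequence while carrying a fixed amount of ``first moment'' with it. So the lower bound $a>0$ in the definition of $\mathbb{P}_{a,b}(\mathbb{R})$ is the part that fails to be preserved under weak limits.

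Concretely, for each $n\in\mathbb{N}$ put $p_{n}:=a/n$ and, for $n$ large enough that $p_{n}\le 1$, define the two-point measure
\[
\mu_{n}:=(1-p_{n})\,\delta(x_{0})+p_{n}\,\delta(x_{0}+n),
\]
using the Dirac-measure notation $\delta(\cdot)$ from the proof of Corollary \ref{T2}. The first step is the bookkeeping check $\int_{\mathbb{R}}|x-x_{0}|\,\mu_{n}(dx)=p_{n}\cdot n=a$, so $a\le a\le b$ (here $a<b$ is used) and hence $\mu_{n}\in\mathbb{P}_{a,b}(\mathbb{R})$ for all large $n$. The second step is to verify $\mu_{n}\Rightarrow\delta(x_{0})$: for any $v\in\mathcal{C}_{b}(\mathbb{R})$ with $M:=\sup_{x}|v(x)|$, one has $\bigl|\int_{\mathbb{R}}v\,d\mu_{n}-v(x_{0})\bigr|=p_{n}\,|v(x_{0}+n)-v(x_{0})|\le 2Mp_{n}\to 0$, which is precisely the defining condition of weak convergence. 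The third step is the trivial observation $\int_{\mathbb{R}}|x-x_{0}|\,\delta(x_{0})(dx)=0<a$, so that $\delta(x_{0})\notin\mathbb{P}_{a,b}(\mathbb{R})$. Together these three steps show that $\mathbb{P}_{a,b}(\mathbb{R})$ contains a sequence whose weak limit lies outside it, so the set is not closed.

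There is no real obstacle here; the only point requiring a little care is choosing the escaping mass $p_{n}$ and the escaping location in tandem so that the first moment stays pinned at the admissible value $a$ — any locations $t_{n}\to\infty$ together with weights $p_{n}=a/t_{n}$ would do equally well. The conceptual content is simply that boundedness of the test functions $v\in\mathcal{C}_{b}(\mathbb{R})$ annihilates the contribution of the vanishing atom at $x_{0}+n$, whereas the unbounded integrand $|x-x_{0}|$ does not, and this discrepancy is exactly what breaks closedness of the lower level constraint.
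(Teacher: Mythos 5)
Your proof is correct and uses essentially the same mechanism as the paper: a two-point sequence in which a vanishing atom escapes to infinity carrying exactly first moment $a$, so the weak limit is a point mass whose first moment drops below the lower bound $a$. The paper's sequence puts its main atom at $\tfrac{na}{2n-1}\to a/2$ so the limit is $\delta(a/2)$ with first moment $a/2<a$, while yours limits to $\delta(x_{0})$ with first moment $0$ — a cosmetic difference only, and your bookkeeping and the verification of weak convergence are both sound.
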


\begin{proof}
Take $x_{0}=0,$ so
\[
\mathbb{P}_{a,b}(\mathbb{R})=\{\mu\in\mathbb{P(R)}:a\leq\int_{\mathbb{R}%
}\left\vert x\right\vert \mu(dx)\leq b\}.
\]

Let $\{\mu_{n}\}$ be the sequence in $\mathbb{P(R)}$ defined as%
\[
\mu_{n}(\frac{na}{2n-1}):=(1-\frac{1}{2n}),\ \ \ \ \ \ \mu_{n}(na):=\frac
{1}{2n},\ \ n\in\mathbb{N}.
\]
Observe that%
\[
\int_{\mathbb{R}}\left\vert x\right\vert \mu_{n}(dx)=(\frac{na}{2n-1}%
)(1-\frac{1}{2n})+(na)(\frac{1}{2n})=a;
\]
thus, $\mu_{n}\in\mathbb{P}_{a,b}$ all $n\in\mathbb{N}.$

Next, consider the sequence of probability distribution functions $\{F_{n}\}$
corresponding to the sequence of probability measures $\{\mu_{n}\}.$ Clearly,
$F_{n}(x)\rightarrow F(x)$ for $x\neq a/2$ where
\[
F(x):=\left\{
\begin{array}
[c]{c}%
0,\ \ \ \text{if \ \ }x<a/2,\\
1,\ \ \ \ \text{if \ \ }x\geq a/2.
\end{array}
\right.
\]
Thus, $\mu_{n}\Rightarrow\mu$ where $\mu$ is the probability measure
corresponding to probability distribution function $F.$ Notice that $\mu
\notin\mathbb{P}_{a,b}(\mathbb{R)}$; hence, $\mathbb{P}_{a,b}(\mathbb{R)}$ is
not a closed subset.
\end{proof}

\begin{theorem}
\label{T6}If $(X,d)$ is a Heine-Borel metric space, then $\mathbb{P}%
_{a,b}^{r,c}(X)$ is a compact subset for $0<a<b,r>0$ and $c>0$. Hence,
$\mathbb{P}_{a,b}^{r}(X)$ is a $\sigma$-compact subset.
\end{theorem}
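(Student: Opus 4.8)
The plan is to follow the same two-step scheme as in the proof of Theorem \ref{T1}: first relative compactness, then closedness, the latter being the delicate point here. For relative compactness there is nothing new to do: since the constraint $\int_{X}d(x,x_{0})\mu(dx)\leq b$ is already part of the definition of $\mathbb{P}_{a,b}(X)$, we have $\mathbb{P}_{a,b}^{r,c}(X)\subset\mathbb{P}_{b}(X)$, and $\mathbb{P}_{b}(X)$ is compact by Theorem \ref{T1}. Hence every sequence in $\mathbb{P}_{a,b}^{r,c}(X)$ has a weakly convergent subsequence, and it only remains to show that $\mathbb{P}_{a,b}^{r,c}(X)$ is a closed subset of $\mathbb{P}(X)$.

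So let $\{\mu_{n}\}\subset\mathbb{P}_{a,b}^{r,c}(X)$ with $\mu_{n}\Rightarrow\mu$. The two upper-bound constraints are handled exactly as in Theorem \ref{T1}: the functions $Z:=d(\cdot,x_{0})$ and $Z^{1+r}$ are nonnegative and lower semicontinuous, so truncating by $Z\mathbb{I}_{B(x_{0},k)}$ and $Z^{1+r}\mathbb{I}_{B(x_{0},k)}$, applying the portmanteau inequality $\int Z_{k}\,d\mu\leq\liminf_{n}\int Z_{k}\,d\mu_{n}$, and letting $k\rightarrow\infty$ by monotone convergence, yields
\[
\int_{X}d(x,x_{0})\,\mu(dx)\leq b,\qquad\int_{X}d^{1+r}(x,x_{0})\,\mu(dx)\leq c .
\]
In particular $\mu$ has finite first moment.

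The genuinely new point---and the main obstacle, since Proposition \ref{T5} shows that $\mathbb{P}_{a,b}(\mathbb{R})$ itself need not be closed---is the lower bound $\int_{X}d(x,x_{0})\,\mu(dx)\geq a$. This is where the extra constraint $\int_{X}d^{1+r}(x,x_{0})\,\mu_{n}(dx)\leq c$ does its work: it forces the family $\{d(\cdot,x_{0})\}$ to be uniformly integrable under $\{\mu_{n}\}$ (a de la Vall\'ee-Poussin type criterion), since by Markov's inequality, for every $M>0$,
\[
\int_{\{d(x,x_{0})>M\}}d(x,x_{0})\,\mu_{n}(dx)\leq M^{-r}\int_{X}d^{1+r}(x,x_{0})\,\mu_{n}(dx)\leq c\,M^{-r},
\]
which tends to $0$ as $M\rightarrow\infty$, uniformly in $n$. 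Combining this with $\mu_{n}\Rightarrow\mu$ and the continuity of $Z$: the truncation $Z\wedge M$ is bounded and continuous, so $\int (Z\wedge M)\,d\mu_{n}\rightarrow\int (Z\wedge M)\,d\mu$; the difference $\int Z\,d\mu_{n}-\int(Z\wedge M)\,d\mu_{n}=\int (Z-M)^{+}\,d\mu_{n}$ is controlled by $c\,M^{-r}$ uniformly in $n$; and $\int(Z\wedge M)\,d\mu\uparrow\int Z\,d\mu$ by monotone convergence, using that $\int Z\,d\mu<\infty$. Letting first $n\rightarrow\infty$ and then $M\rightarrow\infty$ gives
\[
\int_{X}d(x,x_{0})\,\mu_{n}(dx)\longrightarrow\int_{X}d(x,x_{0})\,\mu(dx).
\]
Since each left-hand term lies in $[a,b]$, so does the limit; hence $\mu\in\mathbb{P}_{a,b}^{r,c}(X)$, which proves closedness and therefore compactness. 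Finally, for the $\sigma$-compactness of $\mathbb{P}_{a,b}^{r}(X)$ one just writes $\mathbb{P}_{a,b}^{r}(X)=\cup_{k\geq1}\mathbb{P}_{a,b}^{r,k}(X)$, a countable union of compact sets by what was just established.
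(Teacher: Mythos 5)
Your proof is correct and follows essentially the same route as the paper's: both reduce the problem to closedness via tightness/Theorem \ref{T1}, handle the two upper bounds by lower semicontinuity and truncation, and obtain the delicate lower bound $\int_X d(x,x_0)\,\mu(dx)\geq a$ from the uniform tail estimate $\int_{\{d(x,x_0)>M\}}d(x,x_0)\,\mu_n(dx)\leq c\,M^{-r}$ supplied by the $(1+r)$-moment constraint. The only (harmless) difference is the truncation device: you use the bounded continuous function $Z\wedge M$ and conclude full convergence of the first moments, whereas the paper truncates with the indicator of a closed ball and invokes the one-sided portmanteau inequality for upper semicontinuous functions.
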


\begin{proof}
Using similar arguments to the proof of Theorem \ref{T1}, one can prove that
$\mathbb{P}_{a,b}^{r,c}(X)$ is tight. Then, by Prokhorov theorem, it suffices
to prove that $\mathbb{P}_{a,b}^{r,c}(X)$ is a closed subset. Thus, let
$\{\mu_{n}\}\subset\mathbb{P}_{a,b}^{r,c}(X)$ be a sequence that converges to
a probability measure $\mu\in\mathbb{P}(X)$. Proceeding again as in the proof
of Theorem \ref{T1}, it follows that%
\[
\int_{X}d(x,x_{0})\mu(dx)\leq b\ \ \ \text{and}\ \ \ \int_{X}d(x,x_{0}%
)^{1+r}\mu(dx)\leq c.
\]
Hence, it only remains to prove that%
\[
\int_{X}d(x,x_{0})\mu(dx)\geq a.
\]

Next, notice that the following inequalities hold for $\lambda\in
\mathbb{P}_{a,b}^{r,c}(X)$:%
\begin{align*}
\int_{\{x\in X:d(x,x_{0})\geq k\}}d(x,x_{0})\lambda(dx)  &  =\int_{\left\{
x\in X:\left(  \frac{d(x,x_{0})}{k}\right)  ^{r}\geq1\right\}  }%
d(x,x_{0})\lambda(dx)\\
& \\
&  \leq\int_{\left\{  x\in X:\left(  \frac{d(x,x_{0})}{k}\right)  ^{r}%
\geq1\right\}  }\frac{(d(x,x_{0}))^{1+r}}{k^{r}}\lambda(dx)\\
& \\
&  \leq\frac{c}{k^{r}}.
\end{align*}
Thus, for each $\varepsilon>0$ there exists $k_{\varepsilon}>0$ such that%
\[
\int_{\{x\in X:d(x,x_{0})\geq k_{\varepsilon}\}}d(x,x_{0})\lambda
(dx)<\varepsilon\ \ \ \forall\lambda\in\mathbb{P}_{a,b}^{r,c}(X).
\]

On the other hand, note that the mapping $x\rightarrow d(x,x_{0}%
)\mathbb{I}_{B_{d}[x_{0},k_{\varepsilon}]}(x)$ is upper semicontinuous. Then,%
\begin{align*}
\int_{X}d(x,x_{0})\mu(dx)  &  \geq\int_{X}d(x,x_{0})\mathbb{I}_{B_{d}%
[x_{0},k_{\varepsilon}]}(x)\mu(dx)\\
& \\
&  \geq\limsup_{n\rightarrow\infty}\int_{X}d(x,x_{0})\mathbb{I}_{B_{d}%
[x_{0},k_{\varepsilon}]}(x)\mu_{n}(dx)\\
& \\
&  =\limsup_{n\rightarrow\infty}\left(  \int_{X}d(x,x_{0})\mu_{n}(dx)-\int%
_{X}d(x,x_{0})\mathbb{I}_{\{x\in X:d(x,x_{0})>k_{\varepsilon}\}}\mu
_{n}(dx)\right) \\
& \\
&  \geq\limsup_{n\rightarrow\infty}\int_{X}d(x,x_{0})\mu_{n}(dx)-\varepsilon\\
& \\
&  \geq a-\varepsilon.
\end{align*}
Since $\varepsilon$ is arbitrary, it follows that%
\[
\int_{X}d(x,x_{0})\mu(dx)\geq a,
\]
which completes the proof.
\end{proof}

\section{The distribution free newsvendor problem}

The newsvendor o newsboy problems constitute a family of optimization problems
widely studied in the field of operations research \cite{Choi, Gallego, Qin}.
The problem in its classical version is to determine the optimal stock
quantity $x^{\ast}\geq0$\ of a perishable product that a retailer has to ask
to a suplier periodically, say, daily, in order to face a random demand
$W\geq0$ with known probability distribution $\mu$. Thus, denoting by $c>0$
the unit purchase cost, by $p>0$ the selling price and by $q\geq0$ the unit
salvage price for the unsold products, the problem is to find the stock
quantity $x^{\ast}$ that maximize the expected reward%
\[
\pi(x,\mu):=pE_{\mu}\min(x,W)+qE_{\mu}\max(x-W,0)-cx,\ x\geq0,
\]
where $E_{\mu}$ stands for the expectation with respect to the probability
distribution measure $\mu$ of the random demand. Assuming that $p>c>q$, the
optimal stocking quantity is%
\[
x^{\ast}=\inf\{x\geq0:F(x)\geq\tfrac{p-c}{p-q}\},
\]
where $F$ is the probability distribution function defined by $\mu.$ In
particular, if $F$ is continuous and strictly increasing, then $x^{\ast
}=F^{-1}((p-c)/(p-q)).$

The above solution is very appealing but it assumes that the demand
distribution is completely known, which rarely ocurs in practice. Determining
the exact demand distribution is quite difficult or even impossible due to the
lack structural properties of the distribution and, in many cases, the
information available is just restricted to past observations that hopefully
allow good estimations of the mean and the variance. This give rise to the
problem known as \textquotedblleft distribution free newsvendor
problem\textquotedblright\ where the mean and variance are known but the
distribution itself is not. The standard approach to this problem is to
maximize the expected profits (or to minimize the expected cost) under the
worse possible distribution. More precisely, denoting by $\mathcal{F}%
(m,s^{2})$ the class of probability distribution measures on the set of
nonnegative real numbers $\mathbb{R}_{+}$ with finite mean $m$ and finite
variance $s^{2},$ the problem is to find the stock quantity $x^{\ast}\geq0$
such that%
\begin{equation}
\inf_{\mu\in\mathcal{F}(m,s^{2})}\pi(x^{\ast},\mu)=\sup_{x\geq0}\inf_{\mu
\in\mathcal{F}(m,s^{2})}\pi(x,\mu). \label{S1}%
\end{equation}

The above version of the newsvendor problems was introduced by Scarf
\cite{Scarf}, who gave a closed form and readily computable solution, namely,%
\begin{equation}
x^{\ast}=m+\frac{s^{2}}{2}\left(  \sqrt{\frac{p-c}{c-q}}-\sqrt{\frac{c-q}%
{p-c}}\right)  . \label{S2}%
\end{equation}
The above solution has became known as the Scarf's rule, and next it is
briefly discussed. A very detailed discusion can be found in reference
\cite{Gallego} together a number of extensions of the newsvendor problem with
unknown distribution. After that, it is shown the validity of the Scarf's rule
for the case where it is only known that the demand distribution $\mu$ has a
mean belonging to a closed interval and the variance is bounded above, that
is, $\mu$ belongs to a subset $\mathbb{P}_{a,b}^{1,c}(\mathbb{R}_{+}).$

Let $m(\mu)$ and $s^{2}(\mu)$ be the mean and the variance of the probability
measure $\mu$, respectively, whenever these quantities are finite. After some
elementary computation, one can verify that%
\[
\pi(x,\mu)=(p-q)m(\mu)-P(x,\mu)
\]
for all $x\geq0,\mu\in\mathbb{P}(\mathbb{R}_{+})$, where%
\[
P(x,\mu):=(c-q)x+(p-q)E_{\mu}\max(W-x,0).
\]
Thus, $x^{\ast}\geq0$ satisfies (\ref{S1}) if and only if%
\[
\sup_{\mu\in\mathcal{F}(m,s^{2})}P(x^{\ast},\mu)=\inf_{x\geq0}\sup_{\mu
\in\mathcal{F}(m,s^{2})}P(x,\mu).
\]
The next remark gives the key points that lead to the optimality of the
Scarf's rule (\ref{S2}).

\begin{remark}
\label{T7}(c.f. \cite{Scarf}, \cite{Gallego}). Let $m\geq0$ and $s\geq0.$ Then:

(a) $E_{\mu}[\max(W-x,0)]\leq\dfrac{1}{2}[\sqrt{s^{2}+(x-m)^{2}}-(x-m)]$ for
all $\mu\in\mathcal{F}(m,s^{2})$ and $x\geq0.$

(b) For each $x\geq0$ there exists a unique distribution $\overline{\mu}%
_{x}\in\mathcal{F}(m,s^{2})$ such that%
\[
E_{\overline{\mu}_{x}}[\max(W-x,0)]=\dfrac{1}{2}[\sqrt{s^{2}+(x-m)^{2}%
}-(x-m)];
\]
in fact, the distribution $\overline{\mu}_{x}$\ is concentrated at two points.

(c)\ Then%
\begin{align*}
\inf_{x\geq0}\sup_{\mu\in\mathcal{F}(m,s^{2})}P(x,\mu)  &  =\inf_{x\geq
0}L(x,m,s^{2})\\
& \\
&  =L(x^{\ast},m,s^{2}),
\end{align*}

where
\[
L(x,m,s^{2}):=(c-q)x+\frac{p-q}{2}\left[  \sqrt{s^{2}+(x-m)^{2}}-(x-m)\right]
.
\]

\end{remark}

The next theorem proves the optimality of the Scarf's rule when the
\textquotedblleft true\textquotedblright\ and unknown distribution $\mu$ has a
mean belonging to the interval $[a,b]$ and the variance $s^{2}$ is in the
interval $[0,d^{2}],$ where $0<a<b$ and $d>0.$ In other words, $\mu$ belongs
to the compact subset of measures $\mathbb{P}_{a,b}^{1,d^{2}+b^{2}}%
(\mathbb{R}_{+}).$

\begin{theorem}
\label{T8}Suppose that the probability distribution of the demand belongs to
$\mathbb{P}_{a,b}^{1,d^{2}+b^{2}}(\mathbb{R}_{+}).$Then the stock quantity%
\[
x^{\ast}=b+\frac{d^{2}}{2}\left(  \sqrt{\frac{p-c}{c-q}}-\sqrt{\frac{c-q}%
{p-c}}\right)
\]
satisfies the equalities%
\begin{align*}
\inf_{\mu\in\mathbb{P}_{a,b}^{1,d^{2}+b^{2}}(\mathbb{R}_{+})}\pi(x^{\ast}%
,\mu)  &  =\sup_{x\geq0}\inf_{\mu\in\mathbb{P}_{a,b}^{1,d^{2}+b^{2}%
}(\mathbb{R}_{+})}\pi(x,\mu)\\
& \\
&  =(c-q)x^{\ast}+\frac{p-q}{2}\left[  \sqrt{d^{2}+(x^{\ast}-b)^{2}}-(x^{\ast
}-b)\right]
\end{align*}

\end{theorem}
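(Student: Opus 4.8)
The plan is to reduce the inner infimum over probability measures to a two‑parameter optimization in the mean and the variance by means of Remark~\ref{T7}, and then to solve the remaining one‑dimensional problem in the stock level $x$ with Scarf's rule. As a preliminary observation, $\mathbb{R}_{+}$ is a Heine--Borel metric space, so $\mathbb{P}_{a,b}^{1,d^{2}+b^{2}}(\mathbb{R}_{+})$ is weakly compact by Theorem~\ref{T6}; moreover, since $w\mapsto\min(w,x)$ is bounded and continuous and $\pi(x,\mu)=(p-q)E_{\mu}\min(W,x)-(c-q)x$, the mapping $\mu\mapsto\pi(x,\mu)$ is weakly continuous, so the infimum defining the worst‑case profit is attained for each $x\ge0$ and the outer problem is a genuine maximum.

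Fix $x\ge0$. Using $\pi(x,\mu)=(p-q)m(\mu)-P(x,\mu)$ and partitioning the feasible measures according to their mean $m:=m(\mu)\in[a,b]$ (for which $(p-q)m$ is constant), I would first obtain
\[
\inf_{\mu}\pi(x,\mu)=\inf_{a\le m\le b}\Bigl[(p-q)m-\sup\bigl\{P(x,\mu):m(\mu)=m,\ E_{\mu}[W^{2}]\le d^{2}+b^{2}\bigr\}\Bigr].
\]
For a measure with mean $m$, the constraint $E_{\mu}[W^{2}]\le d^{2}+b^{2}$ is a bound $s^{2}(\mu)\le d^{2}+b^{2}-m^{2}$ on the variance, and $L(x,m,\cdot)$ is increasing in its last argument, so by Remark~\ref{T7}(a),(b) the inner supremum equals $L\bigl(x,m,d^{2}+b^{2}-m^{2}\bigr)$ and is attained by a two‑point distribution that saturates the second‑moment budget. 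The algebraic point that makes this tractable is that, with $s^{2}=d^{2}+b^{2}-m^{2}$, one has $s^{2}+(x-m)^{2}=d^{2}+b^{2}+x^{2}-2mx$, so the $m^{2}$ disappears; the bracketed quantity becomes an explicit function of $m$ that turns out to be monotone on $[a,b]$, so its infimum is attained at an endpoint $\widehat m$ and
\[
\inf_{\mu}\pi(x,\mu)=(p-q)\widehat m-L\bigl(x,\widehat m,\widehat s^{2}\bigr),\qquad \widehat s^{2}:=d^{2}+b^{2}-\widehat m^{2}.
\]

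Finally, maximizing $x\mapsto(p-q)\widehat m-L(x,\widehat m,\widehat s^{2})$ over $x\ge0$ amounts to minimizing $L(\cdot,\widehat m,\widehat s^{2})$, which is exactly the content of Remark~\ref{T7}(c): the minimizer is the Scarf point~(\ref{S2}) associated with the mean $\widehat m$ and variance $\widehat s^{2}$, and this yields the asserted $x^{\ast}$ and optimal value; since $x^{\ast}$ realizes the outer maximum, the displayed quantities in the statement coincide. I expect the crux to be the monotonicity computation of the second paragraph --- differentiating the bracketed expression in $m$ after imposing $s^{2}=d^{2}+b^{2}-m^{2}$ and checking that the sign does not change on $[a,b]$ --- because that is precisely where the special choice of second‑moment budget $d^{2}+b^{2}$ is used, and it is what legitimizes moving the infimum over $m$ outside the supremum over $\mu$. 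The compactness input, the bookkeeping for the two‑point maximizers, and the closing one‑dimensional minimization are all routine once that step is settled.
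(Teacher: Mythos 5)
Your reduction in the first two paragraphs is sound, and it is the careful way to set up the maximin problem: because the mean is not constant over $\mathbb{P}_{a,b}^{1,d^{2}+b^{2}}(\mathbb{R}_{+})$, you rightly keep the term $(p-q)m$ inside the infimum rather than converting $\sup_{x}\inf_{\mu}\pi$ into $\mathrm{const}-\inf_{x}\sup_{\mu}P$. The genuine gap is exactly the step you defer as ``the crux.'' Carry it out: with $s^{2}=d^{2}+b^{2}-m^{2}$ one has
\[
(p-q)m-L\bigl(x,m,d^{2}+b^{2}-m^{2}\bigr)=\tfrac{p-q}{2}m+\tfrac{p-q}{2}x-(c-q)x-\tfrac{p-q}{2}\sqrt{d^{2}+b^{2}+x^{2}-2mx},
\]
whose derivative in $m$ is $\tfrac{p-q}{2}\bigl[1+x\big/\sqrt{d^{2}+b^{2}+x^{2}-2mx}\bigr]>0$. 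So the function is strictly increasing and the infimum over $m\in[a,b]$ sits at the \emph{lower} endpoint $\widehat m=a$, with worst-case variance $\widehat s^{2}=d^{2}+b^{2}-a^{2}$. Feeding these into your final step produces the Scarf point built from $(a,\,d^{2}+b^{2}-a^{2})$ and the value $(p-q)a-L\bigl(\cdot,a,d^{2}+b^{2}-a^{2}\bigr)$ --- neither of which matches the asserted $x^{\ast}=b+\frac{d^{2}}{2}(\cdots)$ nor the asserted value $L(x^{\ast},b,d^{2})$ (which, note, lacks any $(p-q)m$ term at all). You cannot claim the endpoint analysis ``yields the asserted $x^{\ast}$'' without determining which endpoint wins; here the sign is unambiguous and it is the wrong one for the stated conclusion. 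Intuitively this is forced: a smaller mean both lowers expected sales directly and frees up second-moment budget for a larger variance, so nature always prefers $m=a$.

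For contrast, the paper's proof reaches $(b,d^{2})$ by maximizing $L(x,m,s^{2})$ over the rectangle $(m,s^{2})\in[a,b]\times[0,d^{2}]$ in the minimax problem for the cost $P$, where $L$ is increasing in both arguments. But that route relies on the identity $\pi=(p-q)m(\mu)-P$ with $m(\mu)$ treated as a constant, which is only legitimate when the mean is fixed across the feasible set (as in $\mathcal{F}(m,s^{2})$), and the rectangle $[a,b]\times[0,d^{2}]$ is a proper subset of the $(m,s^{2})$ pairs realized in $\mathbb{P}_{a,b}^{1,d^{2}+b^{2}}(\mathbb{R}_{+})$. Your decomposition, done correctly, exposes precisely this divergence: the retained $(p-q)m$ term reverses which corner is extremal. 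As written, your proposal does not terminate in the displayed equalities, and no choice of endpoint will make it do so.
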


\begin{proof}
First note that for each $x\geq0$ the mapping%
\[
w\rightarrow p\min(x,w)+q\max(x-w,0)-cx
\]
is continuous on $\mathbb{R}_{+}$ and bounded by $px.$ Thus, $\pi(x,\cdot)$ is
continuous on $\mathbb{P(R}_{+})$\ for each $x\geq0$, that is, $\pi(x,\mu
_{n})\rightarrow\pi(x,\mu)$ whenever $\mu_{n}\Rightarrow\mu.$ Since
$\mathbb{P}_{a,b}^{1,d^{2}+b^{2}}(\mathbb{R}_{+})$ is a compact subset, for
each $x$ there exists a probability measure $\mu_{x}\in\mathbb{P}%
_{a,b}^{1,d^{2}+b^{2}}(\mathbb{R}_{+})$ such that%
\begin{align*}
\sup_{\mu\in\mathbb{P}_{a,b}^{1,d^{2}+b^{2}}(\mathbb{R}_{+})}\pi(x,\mu)  &
=\pi(x,\mu_{x})\\
& \\
&  =(p-q)m(\mu_{x})-P(x,\mu_{x})\\
& \\
&  \leq(p-q)m(\mu_{x})-\inf_{\mu\mathcal{\in F}(m(\mu_{x}),s^{2}(\mu_{x}%
))}P(x,\mu)
\end{align*}
for all $x\geq0.$ Now, from Remark \ref{T7}(b), there exists a unique
probability measure $\overline{\mu}_{x}\in\mathcal{F}(m(\mu_{x}),s^{2}(\mu
_{x})),x\geq0,$ such that%
\[
\inf_{\mu\mathcal{\in F}(m(\mu_{x}),s^{2}(\mu_{x}))}P(x,\mu)=P(x,\overline
{\mu}_{x}).
\]
Then,%
\begin{align*}
\pi(x,\mu_{x})  &  =\sup_{\mu\in\mathbb{P}_{a,b}^{1,d^{2}+b^{2}}%
(\mathbb{R}_{+})}\pi(x,\mu)\\
& \\
&  \leq(p-q)m(\mu_{x})-P(x,\overline{\mu}_{x})\\
& \\
&  =\pi(x,\overline{\mu}_{x})\\
& \\
&  \leq\sup_{\mu\in\mathbb{P}_{a,b}^{1,d^{2}+b^{2}}(\mathbb{R}_{+})}\pi
(x,\mu),
\end{align*}
which implies that $P(x,\mu_{x})=P(x,\overline{\mu}_{x}).$ Therefore, by the
uniqueness of $\overline{\mu}_{x},$ $\mu_{x}=\overline{\mu}_{x}$ for each
$x\geq0.$

On the other hand, it can be verified with direct computations that%
\[
\sup_{(m,s^{2})\in I}L(x,m,s^{2})=L(x,b,d^{2})\ \ \ \forall x\geq0,
\]
where $I:=[a,b]\times\lbrack0,d^{2}].$ Then, by Remark \ref{T7}(c),
\begin{align*}
\sup_{\mu\in\mathbb{P}_{a,b}^{1,d^{2}+b^{2}}(\mathbb{R}_{+})}P(x,\mu)  &
=L(x,m(\mu_{x}),s^{2}(\mu(x))\\
& \\
&  \leq L(x,b,d^{2})\\
& \\
&  =P(x,\mu^{\prime})
\end{align*}
for all $x\geq0,$ where $\mu^{\prime}$ is some probability distribution such
that $m(\mu^{\prime})=b$ and $s^{2}(\mu^{\prime})=d^{2}.$ Hence,%
\begin{align*}
\inf_{x\geq0}\sup_{\mu\in\mathbb{P}_{a,b}^{1,d^{2}+b^{2}}(\mathbb{R}_{+}%
)}P(x,\mu)  &  =\inf_{x\geq0}L(x,b,d^{2})\\
& \\
&  =L(x^{\ast},b,d^{2}),
\end{align*}
with
\[
x^{\ast}=b+\frac{d^{2}}{2}\left(  \sqrt{\frac{p-c}{c-q}}-\sqrt{\frac{c-q}%
{p-c}}\right)  .
\]

\end{proof}

\bigskip

\bigskip

\noindent\textbf{Compliance with Ethical Standards.} This work was partially
supported by Consejo Nacional de Ciencia y Tecnolog\'{\i}a (CONACYT-Mexico)
under grant Ciencia Frontera 2019-87787.\bigskip

\noindent\textbf{Ethical approval: }This article does not contain any studies
with human participants or animals performed by any of the authors.

\end{document}